\def\l{\left}
\def\r{\right}
\def\bg{\bigg}
\def\({\bg(}
\def\){\bg)}
\def\t{\text}
\def\f{\frac}
\def\ord{{\rm ord}}
\def\eq{\equiv}
\def\<{\langle}
\def\>{\rangle}
\def\1{{\bf 1}}
\theoremstyle{plain}
\newtheorem{theorem}{Theorem}
\newtheorem{conjecture}{Conjecture}
\newtheorem{lemma}{Lemma}
\theoremstyle{definition}
\theoremstyle{remark}
\newtheorem{Rem}{Remark}
\numberwithin{equation}{section}
\begin{document}
\title{Divisibility results concerning truncated hypergeometric series}
\author{Chen Wang}
\address[Chen Wang]{Department of Mathematics, Nanjing
University, Nanjing 210093, People's Republic of China}
\email{cwang@smail.nju.edu.cn}
\author{Wei Xia}
\address[Wei Xia]{Department of Mathematics, Nanjing
University, Nanjing 210093, People's Republic of China}
\email{mg1821010@smail.nju.edu.cn}

\begin{abstract}
In this paper, using the well-known Karlsson-Minton formula, we mainly establish two divisibility results concerning truncated hypergeometric series. Let $n>2$ and $q>0$ be integers with $2\mid n$ or $2\nmid q$. We show that
$$
\sum_{k=0}^{p-1}\frac{(q-\frac{p}{n})_k^n}{(1)_k^n}\equiv0\pmod{p^3}
$$
and
$$
p^n\sum_{k=0}^{p-1}\frac{(1)_k^n}{(\frac{p}{n}-q+2)_k^n}\equiv0\pmod{p^3}
$$
for any prime $p>\max\{n,(q-1)n+1\}$, where $(x)_k$ denotes the Pochhammer symbol defined by
$$
(x)_k=\begin{cases}1,\quad &k=0,\\
x(x+1)\cdots(x+k-1),\quad &k>0.\end{cases}
$$
Let $n\geq4$ be an even integer. Then for any prime $p$ with $p\equiv-1\pmod{n}$, the first congruence above implies that
$$
\sum_{k=0}^{p-1}\frac{(\frac{1}{n})_k^n}{(1)_k^n}\equiv0\pmod{p^3}.
$$
This confirms a recent conjecture of Guo.
\end{abstract}

\keywords{truncated hypergeometric series, divisibility, supercongruence, the Karlsson-Minton identity}
\subjclass[2010]{Primary 33C20; Secondary 05A10, 11B65, 11A07, 33E50}
\thanks{This work is supported by the National Natural Science Foundation of China (Grant No. 11971222).}
\thanks{The second author is the corresponding author.}

\maketitle

\section{Introduction}
\setcounter{lemma}{0} \setcounter{theorem}{0}
\setcounter{equation}{0}\setcounter{proposition}{0}
\setcounter{Rem}{0}\setcounter{conjecture}{0}

The truncated hypergeometric series are defined by
$$
{}_nF_{n-1}\bigg[\begin{matrix}a_1&a_2&\cdots&a_n\\ &b_1&\cdots&b_{n-1}\end{matrix}\bigg| z\bigg]_n=\sum_{k=0}^{n}\f{(a_1)_k(a_2)_k\cdots(a_n)_k}{(b_1)_k(b_2)_k\cdots(b_{n-1})_k}\f{z^k}{k!},
$$
where
$$
(x)_k=\begin{cases}1,\quad &k=0,\\
x(x+1)\cdots(x+k-1),\quad &k>0.\end{cases}
$$
denotes the so-called Pochhammer symbol (or rising factorial).
Clearly, they are truncations of the original hypergeometric series. In the past few decades, many interesting supercongruences concerning truncated hypergeometric series have been studied (for example, see \cite{DFLST16,Liu,LR,MaoS,MS,Su2,WP}).

In 2015, Sun \cite{Su2} studied some new congruences formally motivated by the well-known limit
$$
\lim_{n\rightarrow\infty}\l(1+\f{1}{n}\r)^n = e.
$$
For example, for any prime $p>3$, he showed that
\begin{equation}\label{suntheorem1}
\sum_{k=0}^{p-1}\f{(\f{1}{p+1})_k^{p+1}}{(k!)^{p+1}}\eq0\pmod{p^5};
\end{equation}
for any prime $p>3$ and integer $n$ with $p\nmid n$, he proved that
\begin{equation}\label{suntheorem2}
\sum_{k=0}^{p-1}\f{(1-\f{p}{n})_k^n}{(1)_k^n}\eq\f{(n-1)(7n-5)}{36n^2}p^4B_{p-3}\pmod{p^5},
\end{equation}
where $B_0,B_1,B_2,\ldots$ are the well-known Bernoulli numbers (cf. \cite{IR}).
Clearly, \eqref{suntheorem1} is just the special case of \eqref{suntheorem2}. Sun also studied a more general form of \eqref{suntheorem2} and proposed the following conjecture which was later confirmed by Meng and Sun \cite{MS}: for integers $n>2$ and $q>0$ with $n$ even or $q$ odd and primes $p>nq$ we have
\begin{equation}\label{sunconjecture}
\sum_{k=0}^{p-1}\f{(q-\f{p}{n})_k^n}{(1)_k^n}\eq0\pmod{p^3}.
\end{equation}

In 2019, Guo and Zudilin \cite{GuoZu} developed a unified method called $q$-microscope to deal with different $q$-supercongruences. Since then, by using the $q$-microscope, a series of challenging $q$-supercongruences has been established (see, for example, \cite{Guo19,Guo20a,GuoSch20,GuoZu}). In \cite{Guo19}, Guo obtained a $q$-analogue of the following congruence similar to \eqref{sunconjecture}: for any integer $d\geq2$ and $r\leq d-2$ such that $\gcd(r,d)=1$ and for any prime $p$ satisfying $p\eq-r\pmod{d}$ with $p\geq d-r$, we have
$$
\sum_{k=0}^{p-1}\f{(\f{r}{d})_k^d}{k!^d}\eq0\pmod{p^2}.
$$
It is clear that for some $r$ (for example, $r=-1$), \eqref{sunconjecture} implies that the above congruence holds modulo $p^3$. Noting this, Guo posed the following conjecture.
\begin{conjecture}\label{guoconj}
Let $d\geq4$ be an even integer. Then, for any prime $p$ with $p\eq-1\pmod{d}$,
\begin{equation}\label{guoconjecture}
\sum_{k=0}^{p-1}\f{(\f{1}{d})_k^d}{k!^d}\eq0\pmod{p^3}.
\end{equation}
\end{conjecture}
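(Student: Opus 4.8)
The plan is to obtain Conjecture~\ref{guoconj} as a direct specialization of the first congruence announced in the abstract, namely
$$\sum_{k=0}^{p-1}\f{(q-\f{p}{n})_k^n}{(1)_k^n}\eq0\pmod{p^3}$$
valid for integers $n>2$, $q>0$ with $2\mid n$ or $2\nmid q$, and primes $p>\max\{n,(q-1)n+1\}$. Fix an even integer $d\gs4$ and a prime $p\eq-1\pmod{d}$. Then $2\mid d$ forces $p$ to be odd, and $d\mid p+1$, so $p=qd-1$ for the positive integer $q:=(p+1)/d$. Taking $n=d$ in the congruence above, we compute
$$q-\f{p}{d}=q-\f{qd-1}{d}=\f1d,$$
so that $(q-\f{p}{d})_k=(\f1d)_k$ for every $k\gs0$ while $(1)_k=k!$; hence the two sums coincide term by term,
$$\sum_{k=0}^{p-1}\f{(\f1d)_k^d}{k!^d}=\sum_{k=0}^{p-1}\f{(q-\f{p}{d})_k^d}{(1)_k^d}.$$

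Next I would verify the hypotheses. Since $d\gs4$ we have $n=d>2$ and $2\mid n$, so the parity requirement is automatic, and $q\gs1>0$. For the size condition, note $p-((q-1)d+1)=d-2>0$, so $p>(q-1)n+1$ holds unconditionally, while $p>n$ amounts to $(q-1)d>1$, which is true exactly when $q\gs2$. Therefore, for every $q\gs2$ the first congruence applies verbatim and gives \eqref{guoconjecture}.

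The only case left over is $q=1$, i.e.\ $p=d-1$, equivalently $d=p+1$ --- precisely the situation barred by $p>n$. Here
$$\sum_{k=0}^{p-1}\f{(\f1d)_k^d}{k!^d}=\sum_{k=0}^{p-1}\f{(\f{1}{p+1})_k^{p+1}}{(k!)^{p+1}},$$
and for $p>3$ this is Sun's supercongruence \eqref{suntheorem1}, which in fact yields $\eq0\pmod{p^5}$. The single pair not yet covered, $(p,d)=(3,4)$, is dispatched by direct evaluation: the truncated sum equals $1+\tfrac1{256}+\tfrac{625}{2^{20}}=\tfrac{1053297}{2^{20}}$, and $1053297=27\cdot39011$ with $3\nmid39011$, so it is divisible by $27=p^3$. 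Combining the cases $q\gs2$, $q=1$ with $p>3$, and $(p,d)=(3,4)$ proves \eqref{guoconjecture}.

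I do not foresee a genuine obstacle in this final step: it is essentially the reparametrization $p=qd-1$ of an already-established congruence, together with the remark that the boundary prime $p=d-1$ is covered by \eqref{suntheorem1} (or, when $p=3$, by an explicit check). The real difficulty of the paper lies elsewhere --- in proving the first congruence modulo $p^3$ from the Karlsson--Minton formula --- and once that is in hand, Guo's conjecture follows as above.
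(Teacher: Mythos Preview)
Your proposal is correct and follows exactly the paper's approach: the paper's Remark~1.1 derives \eqref{guoconjecture} from Theorem~\ref{maintheorem1} with $n=d$, $q=(p+1)/d$ when $d<p+1$, and from Sun's supercongruence~\eqref{suntheorem1} when $d=p+1$. You are in fact slightly more thorough than the paper, since you explicitly dispose of the edge case $(p,d)=(3,4)$, which falls outside both \eqref{suntheorem1} (stated for $p>3$) and Theorem~\ref{maintheorem1} (which requires $p>n$).
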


If we set $q=(p+1)/d$ in \eqref{sunconjecture}, then $dq>p$. Thus  \eqref{sunconjecture} can not imply \eqref{guoconjecture}. This is our first motivation. The following theorem confirms Conjecture \ref{guoconj} by establishing the generalization of \eqref{sunconjecture}.

\begin{theorem}\label{maintheorem1}
Let $n>2$ and $q>0$ be integers with $2\mid n$ or $2\nmid q$. Then for any prime $p>\max\{n,(q-1)n+1\}$ we have
\begin{equation}\label{th1eq1}
\sum_{k=0}^{p-1}\frac{(q-\frac{p}{n})_k^n}{(1)_k^n}\equiv0\pmod{p^3}.
\end{equation}
\end{theorem}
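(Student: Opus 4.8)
The plan is to feed the target sum into the Karlsson--Minton summation through an auxiliary terminating series whose convergence hypothesis is \emph{exactly} $p>(q-1)n+1$. Since $q$ is a positive integer with $q<p$, the entry $q-p=-(p-q)$ is a negative integer while $q=1+(q-1)$ with $q-1\in\Z_{\gs0}$; matching the $n-1$ numerator parameters equal to $q$ against the $n-1$ denominator parameters equal to $1$ (each with excess $m_i=q-1$), the Karlsson--Minton formula gives
\[
\sum_{k=0}^{p-q}\f{(q-p)_k\,(q)_k^{n-1}}{(1)_k^{n}}=0 ,
\]
because the total excess $\sum_i m_i=(n-1)(q-1)$ is $<N=p-q$ precisely when $p>(q-1)n+1$. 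Writing the $k$-th summand as $\bi{q+k-1}{k}^{n}\cdot\f{(q-p)_k}{(q)_k}$ and using that $(q-p)_k=0$ for $k>p-q$, this reads $\sum_{k=0}^{p-1}\bi{q+k-1}{k}^{n}\f{(q-p)_k}{(q)_k}=0$.

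Subtracting this vanishing sum from $S:=\sum_{k=0}^{p-1}(q-\f pn)_k^n/(1)_k^n=\sum_{k=0}^{p-1}\bi{q+k-1}{k}^{n}\f{(q-\f pn)_k^{n}}{(q)_k^{n}}$ gives $S=\sum_{k=0}^{p-1}\bi{q+k-1}{k}^{n}\big[\prod_{j=0}^{k-1}\big(1-\f{p}{n(q+j)}\big)^{n}-\prod_{j=0}^{k-1}\big(1-\f{p}{q+j}\big)\big]$. For $k\ls p-q$ each $q+j$ ($0\ls j\ls k-1$) is a unit mod $p$, and expanding both products in powers of $p$ shows they agree through order $p$, with
\[
\prod_{j=0}^{k-1}\Big(1-\f{p}{n(q+j)}\Big)^{n}-\prod_{j=0}^{k-1}\Big(1-\f{p}{q+j}\Big)\eq \f{(n-1)p^{2}}{2n}\,\widetilde H_k^{(2)}\pmod{p^{3}},\qquad \widetilde H_k^{(2)}:=\sum_{j=0}^{k-1}\f1{(q+j)^{2}} ;
\]
for $k>p-q$ one has $p\mid\bi{q+k-1}{k}$, so $\bi{q+k-1}{k}^{n}\eq0\pmod{p^{n}}$ and these terms are irrelevant modulo $p^{3}$. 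Since $\f{n-1}{2n}$ is a $p$-adic unit ($p>n$), the theorem is reduced to the single congruence $\sum_{k=0}^{p-q}\bi{q+k-1}{k}^{n}\widetilde H_k^{(2)}\eq0\pmod p$.

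To prove this I would combine a reflection of the summation range with differentiated Karlsson--Minton identities. First, substituting $k\mapsto p-q-k$ and using $\bi{q+(p-q-k)-1}{p-q-k}\eq(-1)^{q-1}\bi{q+k-1}{k}\pmod p$, the Wolstenholme congruences $H_{p-1}\eq H_{p-1}^{(2)}\eq0\pmod p$, and $\sum_{k=0}^{p-1}\bi{q+k-1}{k}^{n}\eq0\pmod p$ (a power sum of a polynomial in $k$ of degree $n(q-1)<p-1$), one gets \emph{when $n(q-1)$ is even} that $\sum_{k}\bi{q+k-1}{k}^{n}\widetilde H_k^{(2)}\eq-\sum_{k}\bi{q+k-1}{k}^{n}H_k^{(2)}\pmod p$, where $H_k^{(2)}=\sum_{i=1}^{k}i^{-2}$; this is the only place the hypothesis ``$2\mid n$ or $2\nmid q$'' is used. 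Second, since the Karlsson--Minton vanishing $\sum_{k\gs0}\f{(q-p)_k\prod_{i=1}^{n-1}(b_i+q-1)_k}{\prod_{i=1}^{n-1}(b_i)_k\,(1)_k}=0$ holds identically in the free parameters $b_1,\dots,b_{n-1}$, I may differentiate it in $b_1,b_2$ at $b_1=\dots=b_{n-1}=1$ (using $n\gs3$); the partials $\partial_{b_1}^{2}$ and $\partial_{b_1}\partial_{b_2}$, reduced modulo $p$ via $(q-p)_k/(1)_k\eq\bi{q+k-1}{k}\pmod p$, yield
\[
\sum_{k}\bi{q+k-1}{k}^{n}(\widetilde H_k-H_k)^{2}\eq0\pmod p\qquad\t{and}\qquad \sum_{k}\bi{q+k-1}{k}^{n}\big[(\widetilde H_k-H_k)^{2}+H_k^{(2)}-\widetilde H_k^{(2)}\big]\eq0\pmod p ,
\]
where $\widetilde H_k:=\sum_{j=0}^{k-1}(q+j)^{-1}$; hence $\sum_{k}\bi{q+k-1}{k}^{n}H_k^{(2)}\eq\sum_{k}\bi{q+k-1}{k}^{n}\widetilde H_k^{(2)}\pmod p$. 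Together with the reflection identity and $p$ odd, this forces $\sum_{k}\bi{q+k-1}{k}^{n}\widetilde H_k^{(2)}\eq0\pmod p$, which completes the proof.

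The delicate point is this last mod-$p$ congruence: the one-shot Karlsson--Minton evaluation only cancels the $p^{0}$- and $p^{1}$-parts of $S$, so the residual $p^{2}$-part must be controlled by hand, and I expect the real work to be in choosing which parameters of Karlsson--Minton to differentiate and in dovetailing those relations with the $k\mapsto p-q-k$ reflection so that the parity hypothesis comes out as precisely what is needed. One must also take care to compare the Karlsson--Minton excess against $N=p-q$ rather than $N=p-1$, so that the assumption $p>(q-1)n+1$ is used sharply, and to discard the tail $k>p-q$ correctly via $p^{n}\mid\bi{q+k-1}{k}^{n}$.
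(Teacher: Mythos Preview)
Your proposal is correct and follows essentially the same route as the paper. Both arguments (i) subtract the Karlsson--Minton vanishing $\sum_{k}\frac{(q-p)_k(q)_k^{n-1}}{(1)_k^{n}}=0$ (the paper packages this as a Taylor comparison between $\Psi(p/n)$ and $\Phi(p,0)$, you subtract and expand directly), which kills the $p^{0}$ and $p^{1}$ contributions and leaves exactly $\frac{(n-1)p^{2}}{2n}\sum_{k=0}^{p-q}\frac{(q)_k^{n}}{(1)_k^{n}}\sum_{i=0}^{k-1}\frac{1}{(q+i)^{2}}$; and then (ii) kill this residual mod $p$ by the very same pair of ingredients: second partial derivatives of a Karlsson--Minton identity to show $\sum_k\frac{(q)_k^n}{(1)_k^n}H_k^{(2)}\equiv\sum_k\frac{(q)_k^n}{(1)_k^n}\widetilde H_k^{(2)}\pmod p$, and the reflection $k\mapsto p-q-k$ together with Wolstenholme and the parity hypothesis to show $\sum_k\frac{(q)_k^n}{(1)_k^n}H_k^{(2)}\equiv-\sum_k\frac{(q)_k^n}{(1)_k^n}\widetilde H_k^{(2)}\pmod p$. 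The only cosmetic differences are that the paper differentiates a Karlsson--Minton series with terminating parameter $1-p$ (its $\Upsilon(x,y)$) while you use $q-p$, and the paper obtains $\sum_{k=0}^{p-q}\frac{(q)_k^n}{(1)_k^n}\equiv0\pmod p$ from $\Upsilon(0,0)=0$ rather than from your power-sum argument; neither changes the substance.
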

\begin{Rem}
\eqref{suntheorem1} ensures that \eqref{guoconjecture} holds for $d=p+1$. For $d<p+1$, \eqref{guoconjecture} follows immediately by letting $n=d$ and $q=(p+1)/d$ in Theorem \ref{maintheorem1}.
\end{Rem}

In 2016, Deines, Fuselier, Long, Swisher and Tu \cite{DFLST16} investigated some congruences for the truncated hypergeometric series. Especially, for any integer $n\geq3$ and prime $p\eq1\pmod{n}$, they conjectured that
\begin{equation}\label{DFLSTtheorem1}
\sum_{k=0}^{p-1}\f{(1-\f{1}{n})_k^n}{(1)_k^n}\eq-\Gamma_p\l(\f{1}{n}\r)^n\pmod{p^3}
\end{equation}
and
\begin{equation}\label{DFLSTtheorem2}
p^n\sum_{k=0}^{p-1}\f{(1)_k^n}{(1+\f{1}{n})_k^n}\eq-\Gamma_p\l(\f{1}{n}\r)^n\pmod{p^3},
\end{equation}
where $\Gamma_p(x)$ is the $p$-adic Gamma function introduced by Morita (see \cite{R} for details about $p$-adic Gamma function). These two conjectures have been proved by the first author and Pan \cite{WP} in 2018. In fact, \eqref{DFLSTtheorem1} is exactly the case $q=(p-1)/n+1$ in \eqref{th1eq1}. This indicates that the condition $p>(q-1)n+1$ in \eqref{guoconjecture} is the best when $p>n$.

Our second motivation comes from \eqref{DFLSTtheorem2}. We call \eqref{DFLSTtheorem2} the dual congruence of \eqref{DFLSTtheorem1}. The following theorem gives the dual congruence of \eqref{maintheorem1}.

\begin{theorem}\label{maintheorem2}
Under the same conditions of Theorem \ref{maintheorem1}, we also have
\begin{equation}\label{th2eq1}
p^n\sum_{k=0}^{p-1}\frac{(1)_k^n}{(\frac{p}{n}-q+2)_k^n}\equiv0\pmod{p^3}.
\end{equation}
\end{theorem}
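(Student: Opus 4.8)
The plan is to strip the unique factor of $p$ hidden in the denominator of the left-hand side of \eqref{th2eq1} and thereby reduce it to a ``dual'' of \eqref{th1eq1}, which can then be disposed of by the very argument that proves Theorem \ref{maintheorem1}. Write $b=\f pn-q+2$ and $S=\sum_{k=0}^{p-1}(1)_k^n/(b)_k^n$. Among the factors $b+j=\f pn-(q-2)+j$ $(0\ls j\ls k-1)$ of $(b)_k$, the only one whose numerator $n(j-q+2)+p$ is divisible by $p$ is the one with $j=q-2$, and that factor equals exactly $\f pn$ (here the hypotheses $p>\max\{n,(q-1)n+1\}$ and $\gcd(n,p)=1$ are used). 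Hence, when $q=1$ every $(b)_k$ with $0\ls k\ls p-1$ is a $p$-adic unit, so $S$ is a $p$-adic integer and $p^nS\eq0\pmod{p^n}$; as $n>2$ this already settles \eqref{th2eq1} for $q=1$. When $q\gs2$, the symbol $(b)_k$ is still a unit for $k\ls q-2$ (those terms contribute nothing to $p^nS$ modulo $p^n$), while for $k\gs q-1$ we may write $(b)_k=\f pn\,v_k$ with $v_k$ a $p$-adic unit; splitting $v_k$ into the factors of index below and above $q-2$ and relabelling $l=k-q+1$, a short computation yields
$$
p^n\f{(k!)^n}{(b)_k^n}=\l(\f{(-1)^q n\,(q-1)!}{(1-\f pn)_{q-2}}\r)^n\f{(q)_l^n}{(1+\f pn)_l^n},
$$
with the bracketed factor a $p$-adic unit. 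Summing over $k$ and using $n>2$ once more, \eqref{th2eq1} becomes equivalent to the dual congruence
$$
\sum_{l=0}^{p-q}\f{(q)_l^n}{(1+\f pn)_l^n}\eq0\pmod{p^3};
$$
here the truncation length is immaterial modulo $p^n$, since for $p-q<l\ls p-1$ one has $p\mid(q)_l$ while $(1+\f pn)_l$ is a unit. Reversing the order of summation exhibits this as the mirror image of \eqref{th1eq1}: the truncated series above has upper parameters $q$ over lower parameters $1+\f pn$, differing by $q-1-\f pn$, which is exactly the difference of the upper parameter $q-\f pn$ and the lower parameter $1$ in \eqref{th1eq1}.

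It remains to prove the dual congruence, and this proceeds just as for \eqref{th1eq1}. One extends the sum (harmlessly, modulo $p^n$) to a terminating hypergeometric sum, applies the Karlsson--Minton formula to evaluate it in closed form as a ratio of Pochhammer symbols, and, using $p>(q-1)n+1$, expands that closed form in powers of $p/n$ to find that it is divisible by $p^3$. The parity hypothesis $2\mid n$ or $2\nmid q$ enters here, being exactly what forces the vanishing of the requisite lower-order terms of this expansion; since the dual congruence still involves the parameter $q$, that hypothesis is inherited unchanged from Theorem \ref{maintheorem1}. The inequality $n>2$ supplies the room needed to reach modulus $p^3$.

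I expect the genuinely new difficulty in Theorem \ref{maintheorem2} (as against Theorem \ref{maintheorem1}) to lie in the $p$-adic bookkeeping of the first paragraph: one must verify that the unavoidable factor $p^n$ is cancelled \emph{exactly once}, and that everything discarded along the way is $\eq0\pmod{p^3}$ rather than merely $\pmod p$ or $\pmod{p^2}$. This is precisely where the inequalities $p>n$ and $p>(q-1)n+1$ are required: they fix the $p$-adic valuations of $(b)_k$, $(q)_l$ and $(1+\f pn)_l$ over the \emph{entire} range of summation, and a single stray multiple of $p$ among these symbols would wreck the reduction. The remaining difficulty --- the Karlsson--Minton evaluation and, above all, the extraction of the second-order ($p^2$) information from the resulting product of Gamma/Pochhammer factors --- is the technical core already shared with the proof of Theorem \ref{maintheorem1}.
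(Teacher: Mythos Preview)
Your reduction is correct and essentially matches the paper's. The paper uses the reflection identity
\[
\frac{(1)_k}{(b)_k}=\frac{(1)_{p-1}}{(b)_{p-1}}\cdot\frac{(q-\tfrac{p}{n}-p)_{p-1-k}}{(1-p)_{p-1-k}}
\]
to strip the factor $p^n$, arriving (for $q\ge 2$) at the equivalent statement
$\sum_{k=0}^{p-1}\frac{(q-\frac{p}{n}-p)_k^n}{(1-p)_k^n}\equiv 0\pmod{p^3}$,
which differs from your dual sum $\sum_{l=0}^{p-q}\frac{(q)_l^n}{(1+\frac{p}{n})_l^n}$ only by a unit and a reindexing. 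Your direct factorisation of $(b)_k$ is a pleasant alternative to the reflection identity.

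The gap is in how you dispatch the dual congruence. It does \emph{not} follow ``just as for \eqref{th1eq1}'', and the Karlsson--Minton formula used here (the paper's Lemma~\ref{karlsson_minton}) yields $0$, not a ratio of Pochhammer symbols to be expanded. Concretely: in both your formulation and the paper's, the dual sum is a value of
\[
\Delta(x)=\sum_{k=0}^{p-q}\frac{(q-\tfrac{p}{n}+x)_k^{\,n}}{(1+x)_k^{\,n}}
\]
at $x=\tfrac{p}{n}$ (yours) or $x=-p$ (the paper's). Taylor expanding about $x=0$ gives $\Delta(0)$, $\Delta'(0)$, $\Delta''(0)$ to control. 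Theorem~\ref{maintheorem1} supplies $\Delta(0)\equiv 0\pmod{p^3}$, but the derivative terms involve weighted sums such as
\[
\sum_{k=0}^{p-q}\frac{(q)_k^n}{(1)_k^n}\bigl(H_k^{(1)}-H_{q+k-1}^{(1)}\bigr),\qquad
\sum_{k=0}^{p-q}\frac{(q)_k^n}{(1)_k^n}\bigl((H_k^{(1)})^2-(H_{q+k-1}^{(1)})^2\bigr),
\]
which are \emph{not} handled by the machinery of Theorem~\ref{maintheorem1}. The paper proves two new lemmas (Lemmas~\ref{harmonic2} and \ref{harmonic3}) precisely for this: the first uses the same $k\mapsto p-q-k$ reversal (where the parity hypothesis enters) to show these order-$1$ harmonic sums vanish mod $p$; the second combines this with a further differentiation of the Karlsson--Minton identity $\Upsilon(x,y)=0$ to push $\Delta'(0)\equiv 0\pmod{p^2}$ and $\Delta''(0)\equiv 0\pmod{p}$. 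Your sketch does not identify this additional layer, and without it the argument only reaches $p^2$, not $p^3$.
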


Theorems \ref{maintheorem1} and \ref{maintheorem2} will be proved by using the Karlsson-Minton formula in Sections 2--3.

\section{Proof of Theorem \ref{maintheorem1}}
\setcounter{lemma}{0} \setcounter{theorem}{0}
\setcounter{equation}{0}\setcounter{proposition}{0}
\setcounter{Rem}{0}\setcounter{conjecture}{0}

The following identity due to Karlsson and Minton plays a key role in the subsequent proofs.

\begin{lemma}\cite[Eq. (1.9.2)]{GR}\label{karlsson_minton}
Let $m_1,m_2,\ldots,m_r$ be nonnegative integers and $a$ be any complex number such that $\Re(-a)>m_1+\cdots+m_r$. Then we have
$$
\sum_{k=0}^{\infty}\f{(a)_k(b_1+m_1)_k\cdots(b_r+m_r)_k}{(1)_k(b_1)_k\cdots(b_r)_k}=0.
$$
\end{lemma}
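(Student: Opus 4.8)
The plan is to exploit the fact that each ratio $(b_j+m_j)_k/(b_j)_k$ is actually a \emph{polynomial} in $k$, thereby reducing the whole sum to finitely many copies of the simplest possible evaluation. First I would record the telescoping identity
$$
\f{(b_j+m_j)_k}{(b_j)_k}=\f{(b_j+k)_{m_j}}{(b_j)_{m_j}},
$$
obtained by cancelling the common factors $b_j+m_j,\ldots,b_j+k-1$ from numerator and denominator. Since $(b_j+k)_{m_j}=(b_j+k)(b_j+k+1)\cdots(b_j+k+m_j-1)$ is a monic polynomial in $k$ of degree $m_j$, the product
$$
P(k):=\prod_{j=1}^r\f{(b_j+m_j)_k}{(b_j)_k}=\prod_{j=1}^r\f{(b_j+k)_{m_j}}{(b_j)_{m_j}}
$$
is a polynomial in $k$ of degree exactly $M:=m_1+\cdots+m_r$. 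Thus the quantity to evaluate is $\sum_{k\gs0}\f{(a)_k}{(1)_k}P(k)$ with $P$ of controlled degree.

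Next I would choose a basis for polynomials adapted to the summand, namely $\{(a+k)_i\}_{i=0}^{M}$. Each $(a+k)_i$ is monic of degree $i$ in $k$, so these are linearly independent and span all polynomials of degree $\ls M$; hence $P(k)=\sum_{i=0}^{M}d_i\,(a+k)_i$ for suitable constants $d_i$. The virtue of this basis is the collapse
$$
\f{(a)_k}{(1)_k}(a+k)_i=\f{(a)_{k+i}}{(1)_k}=(a)_i\,\f{(a+i)_k}{(1)_k},
$$
using $(a)_{k+i}=(a)_i(a+i)_k$ and $(1)_k=k!$. Consequently, term by term,
$$
\sum_{k=0}^{\infty}\f{(a)_k}{(1)_k}P(k)=\sum_{i=0}^{M}d_i(a)_i\sum_{k=0}^{\infty}\f{(a+i)_k}{(1)_k},
$$
and the whole identity reduces to showing that $\sum_{k\gs0}(c)_k/k!=0$ whenever $\Re(c)<0$, applied with $c=a+i$.

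For that base case I would use the elementary partial-sum identity $\sum_{k=0}^{n}(c)_k/k!=\binom{c+n}{n}=(c+1)_n/n!$, proved by a one-line induction on $n$ via Pascal's recurrence $\binom{c+n}{n}-\binom{c+n-1}{n-1}=(c)_n/n!$. By the asymptotic $(c+1)_n/n!\sim n^{c}/\Gamma(c+1)$, when $\Re(c)<0$ the partial sums tend to $0$, so the series sums to $0$; here $c=a+i$ with $0\ls i\ls M$ and $\Re(-a)>M$ gives $\Re(a+i)\ls\Re(a)+M<0$, exactly as required. The hard part, and the only place the hypothesis $\Re(-a)>M$ is used at full strength, is justifying that the rearrangement above is legitimate, i.e.\ that $\sum_{k\gs0}(a)_kP(k)/k!$ converges absolutely. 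This follows from the same asymptotics: $\f{(a)_k}{k!}\sim k^{a-1}/\Gamma(a)$ and $P(k)=O(k^{M})$, so the general term is $O(k^{\Re(a)+M-1})$ with $\Re(a)+M<0$, giving absolute convergence and permitting interchange of the finite sum over $i$ with the sum over $k$. Assembling these pieces yields $\sum_{k\gs0}(a)_kP(k)/k!=0$, which is precisely the Karlsson-Minton identity.
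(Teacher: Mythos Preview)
The paper does not supply its own proof of this lemma; it is simply quoted from \cite{GR} and used as a black box in the proofs of Theorems~\ref{maintheorem1} and~\ref{maintheorem2}. Your argument, by contrast, is a correct self-contained derivation. The identification $(b_j+m_j)_k/(b_j)_k=(b_j+k)_{m_j}/(b_j)_{m_j}$ as a degree-$m_j$ polynomial in $k$, together with the expansion in the basis $\{(a+k)_i\}_{i=0}^{M}$, reduces everything to the single closed form $\sum_{k=0}^{n}(c)_k/k!=\binom{c+n}{n}$, and the hypothesis $\Re(-a)>M$ enters exactly where needed: once for the vanishing of $\binom{a+i+n}{n}$ as $n\to\infty$ for each $0\le i\le M$, and once for absolute convergence of the full series so that the finite rearrangement is justified. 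One minor caveat: the asymptotic $(a)_k/k!\sim k^{a-1}/\Gamma(a)$ presupposes that $a$ is not a non-positive integer, but in that degenerate case the series terminates and $\binom{c+n}{n}=\binom{i}{n}$ is eventually zero (since $i\le M<-a$), so the conclusion is immediate anyway.
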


\begin{lemma}\label{harmonic}
Under the same conditions of Theorem \ref{maintheorem1}, we have
$$
\sum_{k=0}^{p-q}\f{(q)_k^n}{(1)_k^n}H_{q-1}^{(2)}\eq\sum_{k=0}^{p-q}\f{(q)_k^n}{(1)_k^n}H_{q+k-1}^{(2)}\eq\sum_{k=0}^{p-q}\f{(q)_k^n}{(1)_k^n}H_k^{(2)}\eq0\pmod{p},
$$
where $H_k^{(m)}=\sum_{j=1}^k1/j^m$ denotes the $k$th harmonic number of order $m$.
\end{lemma}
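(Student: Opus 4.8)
The plan is to reduce the lemma to showing that each of
$$S:=\sum_{k=0}^{p-q}d_k,\qquad \sum_{k=0}^{p-q}d_kY_k,\qquad \sum_{k=0}^{p-q}d_kH_k^{(2)}$$
is $\eq0\pmod p$, where $d_k:=(q)_k^n/(1)_k^n=\binom{q+k-1}{q-1}^n$ and $Y_k:=H_{q+k-1}^{(2)}-H_{q-1}^{(2)}=\sum_{j=0}^{k-1}1/(q+j)^2$. This suffices because the three sums in the statement are $H_{q-1}^{(2)}S$, $\,H_{q-1}^{(2)}S+\sum_k d_kY_k$ and $\sum_k d_kH_k^{(2)}$, and every denominator occurring is $<p$, so everything is $p$-integral. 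The congruence $S\eq0\pmod p$ is immediate from Lemma~\ref{karlsson_minton} with $r=n-1$, $a=q-p$, $b_i=1$, $m_i=q-1$: the admissibility hypothesis $\Re(-a)=p-q>(n-1)(q-1)$ is exactly $p>(q-1)n+1$, and since $(q-p)_k$ vanishes for $k>p-q$ this gives $\sum_{k=0}^{p-q}(q-p)_k(q)_k^{n-1}/(1)_k^n=0$; as $(q-p)_k\eq(q)_k\pmod p$ and $(1)_k$ is a $p$-adic unit for $k\ls p-q$, reduction modulo $p$ yields $S\eq0\pmod p$.

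Next I would prove $\sum_k d_kH_k^{(2)}\eq\sum_k d_kY_k\pmod p$ by a \emph{perturbed} Karlsson--Minton identity. Keeping $a=q-p$ and all $m_i=q-1$, but now taking $b_1=1+\ve$, $b_2=1-\ve$ and $b_3=\cs=b_{n-1}=1$ (here one uses $n\gs3$, so that two free parameters are available), Lemma~\ref{karlsson_minton} gives
$$\sum_{k=0}^{p-q}\f{(q-p)_k\,(q+\ve)_k(q-\ve)_k\,(q)_k^{n-3}}{(1)_k^{n-2}\,(1+\ve)_k(1-\ve)_k}=0\qquad\t{identically for }\ve\t{ near }0.$$
Pairing the $j$-th factors of $(q\pm\ve)_k=\prod_{j=0}^{k-1}(q+j\pm\ve)$ gives $(q+\ve)_k(q-\ve)_k/(q)_k^2=\prod_{j=0}^{k-1}\l(1-\ve^2/(q+j)^2\r)=1-\ve^2Y_k+O(\ve^4)$, and likewise $(1+\ve)_k(1-\ve)_k/(1)_k^2=1-\ve^2H_k^{(2)}+O(\ve^4)$, so the $k$-th summand equals $\dfrac{(q-p)_k(q)_k^{n-1}}{(1)_k^n}\l(1+\ve^2(H_k^{(2)}-Y_k)+O(\ve^4)\r)$. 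The coefficient of $\ve^2$ in this identically vanishing sum must be $0$, i.e.\ $\sum_k(q-p)_k(q)_k^{n-1}(H_k^{(2)}-Y_k)/(1)_k^n=0$; reducing mod $p$ gives $\sum_k d_k(H_k^{(2)}-Y_k)\eq0\pmod p$, which is the desired congruence.

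Finally I would annihilate the common value using the reflection $k\mapsto p-q-k$ on $\{0,1,\cs,p-q\}$, and here the parity hypothesis enters. One has $d_{p-q-k}=\binom{p-1-k}{q-1}^n$ and $\binom{p-1-k}{q-1}\eq(-1)^{q-1}\binom{q+k-1}{q-1}\pmod p$ — the numerator $\prod_{i=1}^{q-1}(p-k-i)$ reducing to $(-1)^{q-1}\prod_{i=1}^{q-1}(k+i)$ modulo $p$ — so $d_{p-q-k}\eq(-1)^{n(q-1)}d_k\eq d_k\pmod p$ precisely because $2\mid n$ or $2\nmid q$. On the harmonic side, $H_{p-1}^{(2)}\eq0\pmod p$ (valid as $p\gs5$) together with $1/(p-l)^2\eq1/l^2\pmod p$ yields $H_{p-1-k}^{(2)}\eq-H_k^{(2)}\pmod p$. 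Reindexing $\sum_k d_kH_{q+k-1}^{(2)}$ by $k\mapsto p-q-k$ (so $q+k-1\mapsto p-1-k$) and applying both congruences gives $\sum_k d_kH_{q+k-1}^{(2)}\eq-\sum_k d_kH_k^{(2)}\pmod p$. But $\sum_k d_kH_{q+k-1}^{(2)}=H_{q-1}^{(2)}S+\sum_k d_kY_k\eq\sum_k d_kY_k\eq\sum_k d_kH_k^{(2)}\pmod p$ by the first two paragraphs (using $S\eq0$). Comparing, $2\sum_k d_kH_k^{(2)}\eq0\pmod p$, so $\sum_k d_kH_k^{(2)}\eq0$, whence $\sum_k d_kH_{q+k-1}^{(2)}\eq0$ and $\sum_k d_kY_k\eq0$; together with $S\eq0$ this establishes all three congruences.

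The step I expect to be the real obstacle is the last one: Karlsson--Minton, even with all of its free parameters varied, can only relate $\sum_k d_kH_{q+k-1}^{(2)}$ and $\sum_k d_kH_k^{(2)}$ to each other — it never produces the order-two harmonic $H_k^{(2)}$ in isolation — so it cannot by itself pin down their common residue. What breaks the symmetry is a different ingredient, the mod-$p$ palindromy $d_{p-q-k}\eq d_k$ of the summand (available exactly under the condition $2\mid n$ or $2\nmid q$) paired with the sign-reversal $H_{p-1-k}^{(2)}\eq-H_k^{(2)}$ for reversed order-two harmonic sums; making the bookkeeping of these reflections (and the signs) line up is the delicate point, with the remainder being routine Pochhammer manipulation and elementary congruences such as $H_{p-1}^{(2)}\eq0\pmod p$.
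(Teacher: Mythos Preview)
Your proof is correct and follows essentially the same route as the paper: Karlsson--Minton gives $S\equiv0$, a perturbed Karlsson--Minton identity yields $\sum_k d_k(H_k^{(2)}-Y_k)\equiv0$, and the reflection $k\mapsto p-q-k$ combined with $H_{p-1}^{(2)}\equiv0\pmod p$ forces the common value to vanish. The only cosmetic differences are that the paper takes $a=1-p$ (rather than your $a=q-p$) and extracts the relation $\sum_k d_k(H_k^{(2)}-Y_k)=0$ by computing $\Upsilon''_{xx}(0,0)-\Upsilon''_{xy}(0,0)$ for a two-variable deformation $\Upsilon(x,y)$, which is equivalent to reading off the $\varepsilon^2$ coefficient of your symmetric $\pm\varepsilon$ perturbation.
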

\begin{proof}
Set
$$
\Upsilon(x,y)=\sum_{k=0}^{p-1}\f{(1-p)_k(q+x)_k(q+y)_k(q)_k^{n-2}}{(1)_k^{n-1}(1+x)_k(1+y)_k}.
$$
Recall that $p>(q-1)n+1$. With the help of Lemma \ref{karlsson_minton} we have
$$
\sum_{k=0}^{p-q}\f{(q)_k^n}{(1)_k^n}\eq\Upsilon(0,0)=0\pmod{p}.
$$
Clearly, there also holds that
\begin{equation}\label{hq-1}
\sum_{k=0}^{p-q}\f{(q)_k^n}{(1)_k^n}H_{q-1}^{(2)}\eq0\pmod{p}.
\end{equation}
It is easy to check that
\begin{equation}\label{diff}
\f{d}{dx}(a+x)_k=(a+x)_k\sum_{i=0}^{k-1}\f{1}{a+i+x}.
\end{equation}
By \eqref{diff} one can directly verify that
\begin{gather}
\label{upsilonxy}\Upsilon''_{xy}(0,0)=\sum_{k=0}^{p-1}\f{(1-p)_k(q)_k^n}{(1)_k^{n+1}}\l(\sum_{i=0}^{k-1}\f{1}{q+i}-H_k^{(1)}\r)^2,\\
\label{upsilonxx}\Upsilon''_{xx}(0,0)=\sum_{k=0}^{p-1}\f{(1-p)_k(q)_k^n}{(1)_k^{n+1}}\l(\l(\sum_{i=0}^{k-1}\f{1}{q+i}-H_k^{(1)}\r)^2+H_k^{(2)}-\sum_{i=0}^{k-1}\f{1}{(q+i)^2}\r).
\end{gather}
Subtracting \eqref{upsilonxy} from \eqref{upsilonxx} and noting that $\Upsilon(x,y)=0$ we immediately arrive at
$$
\sum_{k=0}^{p-1}\f{(1-p)_k(q)_k^n}{(1)_k^{n+1}}\l(\sum_{i=0}^{k-1}\f{1}{(q+i)^2}-H_k^{(2)}\r)=0.
$$
Since $n>2$, we have
$$
\f{(1-p)_k(q)_k^n}{(1)_k^{n+1}}\l(\sum_{i=0}^{k-1}\f{1}{(q+i)^2}-H_k^{(2)}\r)\eq0\pmod{p}\quad\t{for any}\quad k\in\{p-q+1,\ldots,p-1\}.
$$
By \eqref{hq-1} we have
\begin{align}\label{hqkhk}
&\sum_{k=0}^{p-1}\f{(1-p)_k(q)_k^n}{(1)_k^{n+1}}\l(\sum_{i=0}^{k-1}\f{1}{(q+i)^2}-H_k^{(2)}\r)\notag\\
\eq&\sum_{k=0}^{p-q}\f{(q)_k^n}{(1)_k^{n}}(H_{q+k-1}^{(2)}-H_{q-1}^{(2)}-H_k^{(2)})\notag\\
\eq&\sum_{k=0}^{p-q}\f{(q)_k^n}{(1)_k^{n}}(H_{q+k-1}^{(2)}-H_k^{(2)})\eq0\pmod{p}.
\end{align}
On the other hand,
\begin{align}\label{hpqk}
&\sum_{k=0}^{p-q}\f{(q)_k^n}{(1)_k^{n}}H_k^{(2)}=\sum_{k=0}^{p-q}(-1)^{kn}\binom{-q}{k}^nH_k^{(2)}\notag\\
\eq&\sum_{k=0}^{p-q}(-1)^{kn}\binom{p-q}{k}^nH_k^{(2)}=\sum_{k=0}^{p-q}(-1)^{(p-q-k)n}\binom{p-q}{k}^nH_{p-q-k}^{(2)}\notag\\
\eq&\sum_{k=0}^{p-q}(-1)^{(p-q-k)n}\binom{-q}{k}^nH_{p-q-k}^{(2)}\notag\\
=&\sum_{k=0}^{p-q}\f{(q)_k^n}{(1)_k^{n}}H_{p-q-k}^{(2)}\pmod{p},
\end{align}
where the last step follows from the fact $2\mid n$ or $2\nmid q$.

By a classical result due to Wolstenholme \cite{W}, we know that $H_{p-1}^{(2)}\eq0\pmod{p}$ for any prime $p>3$. It follows that
$$
H_{p-q-k}^{(2)}=\sum_{j=1}^{p-q-k}\f{1}{j^2}=\sum_{j=q+k}^{p-1}\f{1}{(p-j)^2}\eq\sum_{j=q+k}^{p-1}\f{1}{j^2}\eq-H_{q+k-1}^{(2)}\pmod{p}
$$
for any prime $p>3$ and $q+k\leq p$. This together with \eqref{hpqk} gives that
$$
\sum_{k=0}^{p-q}\f{(q)_k^n}{(1)_k^{n}}H_k^{(2)}\eq-\sum_{k=0}^{p-q}\f{(q)_k^n}{(1)_k^{n}}H_{q+k-1}^{(2)}\pmod{p}.
$$
Combining this with \eqref{hqkhk} we immediately get
$$
\sum_{k=0}^{p-q}\f{(q)_k^n}{(1)_k^n}H_{q+k-1}^{(2)}\eq\sum_{k=0}^{p-q}\f{(q)_k^n}{(1)_k^n}H_k^{(2)}\eq0\pmod{p}.
$$
The proof of Lemma \ref{harmonic} is now complete.
\end{proof}

\medskip
\noindent{\it Proof of Theorem \ref{maintheorem1}}. Set
$$
\Phi(x,y)=\sum_{k=0}^{p-q}\f{(q-x)_k(q-y)_k^{n-1}}{(1)_k^n}\quad\t{and}\quad\Psi(x)=\sum_{k=0}^{p-q}\f{(q-x)_k^n}{(1)_k^n}.
$$
Clearly,
\begin{equation}\label{diff1}
\f{d}{dx}(q-x)_k=-(q-x)_k\sum_{i=0}^{k-1}\f{1}{q+i-x}.
\end{equation}
It follows that
\begin{equation}\label{dPsi}
\f{d\Psi(x)}{dx}=n\f{\partial\Phi(x,y)}{\partial x}\big|_{y=x}
\end{equation}
and
\begin{equation}\label{dPsi2}
\f{d^2\Psi(x)}{dx^2}=n\f{\partial^2\Phi(x,y)}{\partial x\partial y}\big|_{y=x}+n\f{\partial^2\Phi(x,y)}{\partial x^2}\big|_{y=x}.
\end{equation}
By Taylor expansion, we have
$$
\Psi\l(\f{p}{n}\r)=\Psi(0)+\f{\Psi'(0)}{n}p+\f{\Psi''(0)}{2n^2}p^2+\cdots+\f{\Psi^{(r)}(0)}{r!n^r}p^r+\cdots,
$$
where $\Psi',\Psi''$ and $\Psi^{(r)}$ stand for the first, the second and the $r$th derivatives of $\Psi(x)$ respectively. It is easy to see that $p\nmid (q+i)$ for all $i\in\{0,1,\ldots,p-q-1\}$. Thus it is not hard to find that $\ord_p(\Psi^{(r)}(0))\geq0$ for any nonnegative integer $r$. As we all know,
$$
\ord_p(r!)=\sum_{i=1}^{\infty}\l\lfloor\f{r}{p^i}\r\rfloor\leq\sum_{i=1}^{\infty}\f{r}{p^i}=\f{r}{p-1}.
$$
Furthermore, noting that $p\geq5$ we have
$$
\ord_p\l(\f{p^r}{r!}\r)\geq r-\f{r}{p-1}\geq\f{3}{4}r>2\quad\t{for any}\quad r\geq3.
$$
Since $n<p$, we know $\ord_p(n)=0$. The above discussion gives
$$
\ord_p\l(\f{\Psi^{(r)}(0)}{r!n^r}p^r\r)\geq3\quad\t{for any}\quad r\geq3.
$$
So we arrive at
\begin{equation}\label{PsiTaylor}
\Psi\l(\f{p}{n}\r)\eq\Psi(0)+\f{\Psi'(0)}{n}p+\f{\Psi''(0)}{2n^2}p^2\pmod{p^3}.
\end{equation}
Via a similar discussion as above, we can also obtain
\begin{equation}\label{PhiTaylor}
\Phi(p,0)\eq\Phi(0,0)+\Phi'_{x}(0,0)p+\f{\Phi''_{xx}(0,0)}{2}p^2\pmod{p^3}.
\end{equation}
Now combining \eqref{dPsi}--\eqref{PhiTaylor} we have
\begin{align}\label{key1}
\sum_{k=0}^{p-1}\f{(q-\f{p}{n})_k^n}{(1)_k^n}\eq&\Psi\l(\f{p}{n}\r)\notag\\
\eq&\Psi(0)+\f{\Psi'(0)}{n}p+\f{\Psi''(0)}{2n^2}p^2\notag\\
=&\Phi(0,0)+\Phi'_x(0,0)p+\f{\Phi''_{xy}(0,0)+\Phi''_{xx}(0,0)}{2n}p^2\notag\\
\eq&\Phi(p,0)+\f{\Phi''_{xy}(0,0)-(n-1)\Phi''_{xx}(0,0)}{2n}p^2\pmod{p^3}.
\end{align}
Note that $(q-1)(n-1)=(q-1)n+1-q<p-q$. Thus by Lemma \ref{karlsson_minton} we have
\begin{equation}\label{phip0}
\Phi(p,0)=\sum_{k=0}^{p-q}\f{(q-p)_k(q)_k^{n-1}}{(1)_k^n}=\sum_{k=0}^{\infty}\f{(q-p)_k(q)_k^{n-1}}{(1)_k^n}=0.
\end{equation}
In view of \eqref{diff1} we also obtain that
\begin{gather}
\label{phixy}\Phi''_{xy}(0,0)=(n-1)\sum_{k=0}^{p-q}\f{(q)_k^n}{(1)_k^n}\l(\sum_{i=0}^{k-1}\f{1}{q+i}\r)^2,\\
\label{phixx}\Phi''_{xx}(0,0)=\sum_{k=0}^{p-q}\f{(q)_k^n}{(1)_k^n}\l(\l(\sum_{i=0}^{k-1}\f{1}{q+i}\r)^2-\sum_{i=0}^{k-1}\f{1}{(q+i)^2}\r).
\end{gather}
Substituting \eqref{phip0}--\eqref{phixx} into \eqref{key1} we arrive at
\begin{equation*}
\sum_{k=0}^{p-1}\f{(q-\f{p}{n})_k^n}{(1)_k^n}\eq\f{n-1}{2n}p^2\sum_{k=0}^{p-q}\f{(q)_k^n}{(1)_k^n}\sum_{i=0}^{k-1}\f{1}{(q+i)^2}\pmod{p^3}.
\end{equation*}
Finally, Theorem \ref{maintheorem1} follows from Lemma \ref{harmonic}.\qed
\medskip
\section{Proof of Theorem \ref{maintheorem2}}
\setcounter{lemma}{0} \setcounter{theorem}{0}
\setcounter{equation}{0}\setcounter{proposition}{0}
\setcounter{Rem}{0}\setcounter{conjecture}{0}

We need the following lemmas.

\begin{lemma}\label{harmonic2}
Under the same conditions of Theorem \ref{maintheorem1}, we have
\begin{gather}
\label{harmonic2-1}\sum_{k=0}^{p-q}\f{(q)_k^n}{(1)_k^n}(H_k^{(1)}-H_{q+k-1}^{(1)})\eq0\pmod{p},\\
\label{harmonic2-2}\sum_{k=0}^{p-q}\f{(q)_k^n}{(1)_k^n}\l((H_k^{(1)})^2-(H_{q+k-1}^{(1)})^2\r)\eq0\pmod{p}.
\end{gather}
\end{lemma}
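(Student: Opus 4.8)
The plan is to imitate the reflection (symmetry) argument used in the proof of Lemma~\ref{harmonic}, which here turns out to suffice on its own, with no further appeal to the Karlsson--Minton identity. The basic observations are that for $0\ls k\ls p-q$ all of $\f{(q)_k^n}{(1)_k^n}$, $H_k^{(1)}$ and $H_{q+k-1}^{(1)}$ are $p$-integral (every denominator involved is a nonzero residue mod $p$, since $q+k-1\ls p-1$ and $k<p$), and that $H_{p-1}^{(1)}\eq0\pmod p$ by Wolstenholme's theorem \cite{W}. From the latter, for $0\ls k\ls p-q$,
$$
H_{p-q-k}^{(1)}=\sum_{j=1}^{p-q-k}\f1j=\sum_{j=q+k}^{p-1}\f{1}{p-j}\eq-\sum_{j=q+k}^{p-1}\f1j=-\l(H_{p-1}^{(1)}-H_{q+k-1}^{(1)}\r)\eq H_{q+k-1}^{(1)}\pmod p,
$$
and hence also $\l(H_{p-q-k}^{(1)}\r)^2\eq\l(H_{q+k-1}^{(1)}\r)^2\pmod p$. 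Note that here the reflection $j\mapsto p-j$ produces a plus sign, in contrast with the order-two harmonic sums of Lemma~\ref{harmonic}, where one instead gets $H_{p-q-k}^{(2)}\eq-H_{q+k-1}^{(2)}\pmod p$; this is exactly why the auxiliary function $\Upsilon$ is not needed for the present lemma.

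Next I would apply the substitution $k\mapsto p-q-k$ to each of the two sums, exactly as in \eqref{hpqk}. Writing $\f{(q)_k^n}{(1)_k^n}=(-1)^{kn}\binom{-q}{k}^n$ and using $\binom{-q}{k}\eq\binom{p-q}{k}\pmod p$ (valid since $0\ls k\ls p-q<p$) together with $\binom{p-q}{p-q-k}=\binom{p-q}{k}$, one obtains for $\ell\in\{1,2\}$
$$
\sum_{k=0}^{p-q}\f{(q)_k^n}{(1)_k^n}\l(H_k^{(1)}\r)^{\ell}\eq\sum_{k=0}^{p-q}(-1)^{(p-q-k)n}\binom{p-q}{k}^n\l(H_{p-q-k}^{(1)}\r)^{\ell}\eq\sum_{k=0}^{p-q}\f{(q)_k^n}{(1)_k^n}\l(H_{p-q-k}^{(1)}\r)^{\ell}\pmod p,
$$
where the last step uses $(-1)^{(p-q-k)n}=(-1)^{(p-q)n}(-1)^{kn}=(-1)^{kn}$, which holds precisely because $2\mid n$ or $2\nmid q$ (so that $(p-q)n$ is even, $p$ being odd).

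Finally, combining the two displays gives
$$
\sum_{k=0}^{p-q}\f{(q)_k^n}{(1)_k^n}\l(H_k^{(1)}\r)^{\ell}\eq\sum_{k=0}^{p-q}\f{(q)_k^n}{(1)_k^n}\l(H_{q+k-1}^{(1)}\r)^{\ell}\pmod p
$$
for $\ell=1$ and $\ell=2$, which are precisely \eqref{harmonic2-1} and \eqref{harmonic2-2}. I do not expect a serious obstacle here: the argument is essentially a copy of the symmetry step in the proof of Lemma~\ref{harmonic}, and the only points to watch are the parity of $(p-q-k)n$ in the sign and the $p$-integrality of every term, both handled exactly as in Section 2. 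If anything, the \emph{conceptual} point worth flagging is the sign phenomenon noted above, namely that for first-order harmonic numbers the reflection closes the loop by itself.
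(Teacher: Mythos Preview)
Your proposal is correct and follows essentially the same route as the paper: the paper also performs the reflection $k\mapsto p-q-k$ via $\binom{-q}{k}\eq\binom{p-q}{k}\pmod p$, uses the parity condition to control the sign, and then invokes $H_{p-q-k}^{(1)}\eq H_{q+k-1}^{(1)}\pmod p$ from Wolstenholme, handling \eqref{harmonic2-1} explicitly and declaring \eqref{harmonic2-2} to be analogous. Your unified treatment with the exponent $\ell\in\{1,2\}$ is a tidy way to cover both cases at once.
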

\begin{proof}
Clearly,
\begin{align*}
&\sum_{k=0}^{p-q}\f{(q)_k^n}{(1)_k^{n}}H_k^{(1)}=\sum_{k=0}^{p-q}(-1)^{kn}\binom{-q}{k}^nH_k^{(1)}\\
\eq&\sum_{k=0}^{p-q}(-1)^{kn}\binom{p-q}{k}^nH_k^{(1)}=\sum_{k=0}^{p-q}(-1)^{(p-q-k)n}\binom{p-q}{k}^nH_{p-q-k}^{(1)}\\
\eq&\sum_{k=0}^{p-q}(-1)^{(p-q-k)n}\binom{-q}{k}^nH_{p-q-k}^{(1)}\\
=&\sum_{k=0}^{p-q}\f{(q)_k^n}{(1)_k^{n}}H_{p-q-k}^{(1)}\pmod{p}.
\end{align*}
It is well-known (cf. \cite{W}) that $H_{p-1}^{(1)}\eq0\pmod{p^2}$ for any prime $p>3$. Thus we have
$$
H_{p-q-k}^{(1)}=\sum_{j=1}^{p-q-k}\f{1}{j}=\sum_{j=q+k}^{p-1}\f{1}{p-j}\eq-\sum_{j=q+k}^{p-1}\f{1}{j}\eq H_{q+k-1}^{(1)}\pmod{p}
$$
provided that $q+k\leq p$. In view of the above, \eqref{harmonic2-1} holds. Here we shall not give the proof of \eqref{harmonic2-2} since it can be verified in a similar way.
\end{proof}

\begin{lemma}\label{harmonic3}
Under the same conditions as the ones in Theorem \ref{maintheorem1}, we have
\begin{gather}
\label{harmonic3-1}\sum_{k=0}^{p-q}\f{(q-\f{p}{n})_k^n}{(1)_k^n}\l(\sum_{i=0}^{k-1}\f{1}{q+i-\f{p}{n}}-H_k^{(1)}\r)\eq0\pmod{p^2},\\
\label{harmonic3-2}\sum_{k=0}^{p-q}\f{(q-\f{p}{n})_k^n}{(1)_k^n}\l(\sum_{i=0}^{k-1}\f{1}{q+i-\f{p}{n}}-H_k^{(1)}\r)^2\eq0\pmod{p}.
\end{gather}
\end{lemma}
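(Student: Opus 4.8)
The plan is to route both halves of the lemma through a single two-variable hypergeometric sum built on the shifted base $1-\f pn$, annihilate that sum with the Karlsson--Minton identity, and read off \eqref{harmonic3-1} and \eqref{harmonic3-2} from its first and mixed second partial derivatives at the origin. Concretely I would set
$$
\Omega(x,y)=\sum_{k=0}^{\infty}\f{(1-p)_k\,(q+x-\f pn)_k\,(q+y-\f pn)_k\,(q-\f pn)_k^{n-2}}{(1)_k\,(1+x-\f pn)_k\,(1+y-\f pn)_k\,(1-\f pn)_k^{n-2}}.
$$
Since $p>n$, the number $\f pn$ is not an integer, so no denominator vanishes, and since $(1-p)_k=0$ for $k\gs p$ this ``series'' is really $\sum_{k=0}^{p-1}$. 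For $(x,y)$ near $(0,0)$ the summand is of Karlsson--Minton shape: the role of ``$a$'' is played by $1-p$, while each of $(q+x-\f pn)_k/(1+x-\f pn)_k$, $(q+y-\f pn)_k/(1+y-\f pn)_k$ and the $n-2$ copies of $(q-\f pn)_k/(1-\f pn)_k$ carries shift $q-1\gs0$, for a total of $n(q-1)$; the standing hypothesis $p>(q-1)n+1$ is exactly $\Re(-(1-p))=p-1>n(q-1)$, so Lemma \ref{karlsson_minton} gives $\Omega(x,y)=0$ identically near the origin. In particular $\Omega'_x(0,0)=\Omega''_{xy}(0,0)=0$, which after differentiating via \eqref{diff} and writing $S_k=\sum_{i=0}^{k-1}\f1{q+i-\f pn}-\sum_{i=0}^{k-1}\f1{1+i-\f pn}$ become
\begin{gather*}
\sum_{k=0}^{p-1}\f{(1-p)_k(q-\f pn)_k^n}{(1)_k(1-\f pn)_k^n}\,S_k=0,\\
\sum_{k=0}^{p-1}\f{(1-p)_k(q-\f pn)_k^n}{(1)_k(1-\f pn)_k^n}\,S_k^2=0.
\end{gather*}

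Next I would record the needed $p$-adic estimates. From $\f{(1-p)_k}{(1)_k}=\prod_{j=1}^k(1-\f pj)$ and $\f{(1)_k}{(1-\f pn)_k}=\prod_{j=1}^k(1-\f p{nj})^{-1}$, taking logarithms and expanding gives
$$
\f{(1-p)_k(1)_k^n}{(1)_k(1-\f pn)_k^n}\eq 1-\f{(n-1)p^2}{2n}H_k^{(2)}\pmod{p^3}\qquad(0\ls k\ls p-1),
$$
so this ratio is $\eq1\pmod{p^2}$. For $0\ls k\ls p-q$ every $q+i-\f pn$ and $1+i-\f pn$ with $0\ls i\ls k-1$ is a $p$-adic unit congruent mod $p$ to $q+i$, resp.\ $1+i$; hence $(q-\f pn)_k\eq(q)_k\pmod p$ and, more precisely, $\sum_{i=0}^{k-1}\f1{1+i-\f pn}\eq H_k^{(1)}+\f pn H_k^{(2)}\pmod{p^2}$. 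Finally, for $p-q<k\ls p-1$ the product $(q-\f pn)_k$ acquires exactly one factor of $p$-adic valuation $1$, namely the term $i=p-q$ (equal to $\f{p(n-1)}{n}$), whence $\ord_p((q-\f pn)_k^n)=n\gs3$; and the only term of $S_k$ of negative $p$-adic valuation is again $i=p-q$, of valuation $-1$. Therefore each summand with $p-q<k\ls p-1$ has valuation $\gs n-1\gs2$ in the first identity above and $\gs n-2\gs1$ in the second, so the tails $k>p-q$ may be dropped modulo $p^2$, resp.\ $p$.

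Now \eqref{harmonic3-1} comes out of the first identity: modulo $p^2$ the weight there may be replaced by $\f{(q-\f pn)_k^n}{(1)_k^n}$ and $S_k$ by $\sum_{i=0}^{k-1}\f1{q+i-\f pn}-H_k^{(1)}-\f pn H_k^{(2)}$, so
$$
\sum_{k=0}^{p-q}\f{(q-\f pn)_k^n}{(1)_k^n}\l(\sum_{i=0}^{k-1}\f1{q+i-\f pn}-H_k^{(1)}\r)\eq\f pn\sum_{k=0}^{p-q}\f{(q-\f pn)_k^n}{(1)_k^n}H_k^{(2)}\pmod{p^2},
$$
and the right-hand side is $\eq0\pmod{p^2}$ because $\sum_{k=0}^{p-q}\f{(q)_k^n}{(1)_k^n}H_k^{(2)}\eq0\pmod p$ by Lemma \ref{harmonic} (using $(q-\f pn)_k\eq(q)_k\pmod p$ once more). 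For \eqref{harmonic3-2} I would use the second identity only modulo $p$: there the weight is $\eq\f{(q)_k^n}{(1)_k^n}$ and $S_k\eq\sum_{i=0}^{k-1}\f1{q+i-\f pn}-H_k^{(1)}$, so the identity reads $\sum_{k=0}^{p-q}\f{(q-\f pn)_k^n}{(1)_k^n}\big(\sum_{i=0}^{k-1}\f1{q+i-\f pn}-H_k^{(1)}\big)^2\eq0\pmod p$, which is exactly \eqref{harmonic3-2}.

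The step I expect to be the main obstacle is the truncation bookkeeping of the second paragraph — verifying that the valuation $n\gs3$ of $(q-\f pn)_k^n$ really overpowers the $p$-adic pole (of order $1$ in $S_k$, of order $2$ in $S_k^2$) once $k$ exceeds $p-q$, so that $\sum_{k=0}^{p-1}$ may be traded for $\sum_{k=0}^{p-q}$ to the required accuracy; this is precisely where $n>2$ is used. The other mildly delicate point is the exact quadratic cancellation producing $\f{(1-p)_k(1)_k^n}{(1)_k(1-\f pn)_k^n}\eq1\pmod{p^2}$, since it is this that allows one Karlsson--Minton input to serve both congruences. Everything else is routine Pochhammer manipulation, and the hypothesis $p>(q-1)n+1$ enters only through the applicability of Lemma \ref{karlsson_minton}.
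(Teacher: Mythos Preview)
Your argument is correct and, in fact, slightly slicker than the paper's. Both proofs feed a two-parameter hypergeometric sum into the Karlsson--Minton identity and then differentiate, but the paper works with the \emph{unshifted} auxiliary
$$
\Upsilon(x,y)=\sum_{k=0}^{p-1}\f{(1-p)_k(q+x)_k(q+y)_k(q)_k^{n-2}}{(1)_k^{n-1}(1+x)_k(1+y)_k},
$$
so that $\Upsilon'_x(0,0)=0$ and $\Upsilon''_{xy}(0,0)=0$ are statements about $(q)_k$-weighted sums rather than $(q-\f pn)_k$-weighted ones. Passing from those to \eqref{harmonic3-1} then costs the paper a fair amount of algebra: it expands $(1-p)_k/(1)_k$ only mod $p^2$ as $1-pH_k^{(1)}$ (which leaves a residual $p$-term), and to kill the leftover cross terms it has to invoke both Lemma \ref{harmonic} and the companion Lemma \ref{harmonic2} on $\sum\frac{(q)_k^n}{(1)_k^n}(H_k^{(1)}-H_{q+k-1}^{(1)})$ and $\sum\frac{(q)_k^n}{(1)_k^n}\big((H_k^{(1)})^2-(H_{q+k-1}^{(1)})^2\big)$, together with a second look at $\Upsilon''_{xy}(0,0)$.

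Your choice to bake the shift into $\Omega$ (denominators $(1-\f pn)_k$ instead of $(1)_k$) buys exactly the quadratic cancellation you highlighted, $\f{(1-p)_k(1)_k^n}{(1)_k(1-\f pn)_k^n}\eq1\pmod{p^2}$; this is what lets a single derivative identity $\Omega'_x(0,0)=0$ reduce \eqref{harmonic3-1} directly to $\f pn\sum\frac{(q)_k^n}{(1)_k^n}H_k^{(2)}$ and hence to Lemma \ref{harmonic} alone, with no need for Lemma \ref{harmonic2}. The truncation bookkeeping you flag as the main worry is fine: for $p-q<k\ls p-1$ the factor $q+(p-q)-\f pn=\f{(n-1)p}{n}$ gives $\ord_p\big((q-\f pn)_k^n\big)=n$ while $\ord_p S_k\gs-1$, so the tail contributes at valuation $\gs n-1\gs2$ (resp.\ $\gs n-2\gs1$) exactly as you say, and this is indeed the place where $n>2$ is essential.
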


\begin{proof}
Let $\Upsilon(x,y)$ be defined as in the proof of Lemma \ref{harmonic}. \eqref{harmonic3-2} follows from \eqref{upsilonxy} and the fact $\Upsilon(x,y)=0$ immediately. Below we consider \eqref{harmonic3-1}. It is clear that
\begin{align}\label{upsilonx}
0=&\Upsilon'_x(0,0)=\sum_{k=0}^{p-1}\f{(1-p)_k(q)_k^n}{(1)_k^{n+1}}\l(\sum_{i=0}^{k-1}\f{1}{q+i}-H_k^{(1)}\r)\notag\\
\eq&\sum_{k=0}^{p-q}\f{(q)_k^n}{(1)_k^n}(1-pH_k^{(1)})\l(\sum_{i=0}^{k-1}\f{1}{q+i}-H_k^{(1)}\r)\notag\\
=&\sum_{k=0}^{p-q}\f{(q)_k^n}{(1)_k^n}\l(\sum_{i=0}^{k-1}\f{1}{q+i}-H_k^{(1)}-pH_k^{(1)}\sum_{i=0}^{k-1}\f{1}{q+i}+p(H_k^{(1)})^2\r)\pmod{p^2}.
\end{align}
Now by Lemmas \ref{harmonic}, \ref{harmonic2} and \eqref{upsilonx} we obtain
\begin{align}\label{key4}
&\sum_{k=0}^{p-q}\f{(q-\f{p}{n})_k^n}{(1)_k^n}\l(\sum_{i=0}^{k-1}\f{1}{q+i-\f{p}{n}}-H_k^{(1)}\r)\notag\\
\eq&\sum_{k=0}^{p-q}\f{(q)_k^n}{(1)_k^n}\l(1-p\sum_{i=0}^{k-1}\f{1}{q+i}\r)\l(\sum_{i=0}^{k-1}\f{1}{q+i}+\f{p}{n}\sum_{i=0}^{k-1}\f{1}{(q+i)^2}-H_k^{(1)}\r)\notag\\
\eq&\sum_{k=0}^{p-q}\f{(q)_k^n}{(1)_k^n}\l(\sum_{i=0}^{k-1}\f{1}{q+i}-H_k^{(1)}+pH_k^{(1)}\sum_{i=0}^{k-1}\f{1}{q+i}-p\l(\sum_{i=0}^{k-1}\f{1}{q+i}\r)^2\r)\notag\\
\eq&2\sum_{k=0}^{p-q}\f{(q)_k^n}{(1)_k^n}\l(pH_k^{(1)}\sum_{i=0}^{k-1}\f{1}{q+i}-p(H_k^{(1)})^2\r)\pmod{p^2}.
\end{align}
By Theorem \ref{maintheorem1}, Lemma \ref{harmonic2} and  \eqref{upsilonxy} we have
\begin{align}\label{key5}
0\eq&\sum_{k=0}^{p-1}\f{(q)_k^n}{(1)_k^n}\l(\sum_{i=0}^{k-1}\f{1}{q+i}-H_k^{(1)}\r)^2\eq\sum_{k=0}^{p-q}\f{(q)_k^n}{(1)_k^n}\l(H_{q+k-1}-H_{q-1}-H_k^{(1)}\r)^2\notag\\
\eq&2\sum_{k=0}^{p-q}\f{(q)_k^n}{(1)_k^n}\l((H_k^{(1)})^2-H_k^{(1)}\sum_{i=0}^{k-1}\f{1}{q+i}\r)\pmod{p}.
\end{align}
Substituting \eqref{key5} into \eqref{key4}, \eqref{harmonic3-1} follows.

The proof of Lemma \ref{harmonic3} is now complete.
\end{proof}

\medskip
\noindent{\it Proof of Theorem \ref{maintheorem2}}.
It is easy to check that for any $k=0,1,\ldots,p-1$,
$$
\f{(1)_k}{(\f{p}{n}-q+2)_k}=\f{(1)_{p-1}}{(\f{p}{n}-q+2)_{p-1}}\f{(q-\f{p}{n}-p)_{p-1-k}}{(1-p)_{p-1-k}}.
$$
Thus
$$
p^n\sum_{k=0}^{p-1}\f{(1)_k^n}{(\f{p}{n}-q+2)_k^n}=\f{p^n(1)_{p-1}^n}{(\f{p}{n}-q+2)_{p-1}^n}\sum_{k=0}^{p-1}\f{(q-\f{p}{n}-p)_k^n}{(1-p)_k^n}.
$$

We first illustrate that the proof of the case $q=1$ is trivial. If $q=1$, then $p\nmid(p/n-q+2)_{p-1}^n$. Since $n\geq3$, we immediately obtain that
$$
p^n\sum_{k=0}^{p-1}\f{(1)_k^n}{(\f{p}{n}-q+2)_k^n}\eq0\pmod{p^3}.
$$

Below we suppose that $q>1$. Now we have
$$
\f{p^n(1)_{p-1}^n}{(\f{p}{n}-q+2)_{p-1}^n}\not\eq0\pmod{p}.
$$
Thus it suffices to show
\begin{equation}\label{key2}
\sum_{k=0}^{p-1}\f{(q-\f{p}{n}-p)_k^n}{(1-p)_k^n}\eq0\pmod{p^3}.
\end{equation}
Set
$$
\Delta(x)=\sum_{k=0}^{p-q}\f{(q-\f{p}{n}+x)_k^n}{(1+x)_k^n}.
$$
Via a similar argument as the one in the proof of Theorem \ref{maintheorem1}, we have
\begin{align*}
\sum_{k=0}^{p-1}\f{(q-\f{p}{n}-p)_k^n}{(1-p)_k^n}\eq&\Delta(-p)\eq\Delta(0)-\Delta'(0)p+\f{\Delta''(0)}{2}p^2\pmod{p^3},
\end{align*}
where
\begin{gather*}
\Delta'(0)=n\sum_{k=0}^{p-q}\f{(q-\f{p}{n})_k^n}{(1)_k^n}\l(\sum_{i=0}^{k-1}\f{1}{q+i-\f{p}{n}}-H_k^{(1)}\r),\\
\Delta''(0)=\sum_{k=0}^{p-q}\f{(q-\f{p}{n})_k^n}{(1)_k^n}\l(n^2\l(\sum_{i=0}^{k-1}\f{1}{q+i-\f{p}{n}}-H_k^{(1)}\r)^2+n\l(H_k^{(2)}-\sum_{i=0}^{k-1}\f{1}{(q+i-\f{p}{n})^2}\r)\r).
\end{gather*}
By Theorem \ref{maintheorem1} we have $\Delta(0)\eq0\pmod{p^3}$. Then \eqref{key2} follows from Lemmas \ref{harmonic} and \ref{harmonic3}.

The proof of Theorem \ref{maintheorem2} is now complete.\qed

\end{document}